\definecolor{ForestGreen}{rgb}{0.15,0.416,0.18}
\definecolor{EgyptBlue}{rgb}{0.063,0.2,0.65}
\newtheorem{theorem}{Theorem}[section]
\newtheorem{corollary}[theorem]{Corollary}
\newtheorem{lemma}[theorem]{Lemma}
\newtheorem{proposition}[theorem]{Proposition}
\newtheorem{claim}[theorem]{Claim}
\theoremstyle{definition}
\theoremstyle{definition}
\newtheorem{remark}[theorem]{Remark}
\theoremstyle{definition}
\numberwithin{equation}{section}
\numberwithin{table}{section}
\numberwithin{figure}{section}
\title{Resonant semilinear Robin problems with a general potential}
\author[1]{\textbf{N.S. Papageorgiou}}
\author[2, 3]{\textbf{V.D. R\u{a}dulescu}}
\author[4]{\textbf{D.D. Repov\v{s}}}
\affil[1]{National Technical University, Department of Mathematics,
				Zografou Campus, Athens 15780, Greece}
\affil[2]{Department of Mathematics, Faculty of Sciences,
King Abdulaziz University, P.O. Box 80203, Jeddah 21589, Saudi Arabia}
\affil[3]{Department of Mathematics, University of Craiova, Craiova 200585, Romania}
\affil[4]{Faculty of Education and Faculty of Mathematics and Physics, University of Ljubljana, Ljubljana 1000, Slovenia}
\newcommand{\RR}{\mathbb R}
\newcommand{\NN}{\mathbb N}
\renewcommand{\maketitle}{\bgroup\setlength{\parindent}{0pt}

\vspace{1truecm}
\begin{center}{\vbox{\titlefont\@title}}\end{center}
\vspace{0.5truecm}
\begin{center}{\@author} \end{center}

\egroup
}
\renewcommand{\@fnsymbol}[1]{%
    \ifcase#1 \or {\,\Letter\!} \or\textasteriskcentered\or \textasteriskcentered\textasteriskcentered
    \else\@ctrerr\fi}
\newcommand{\hbibitem}[4]{\bibitem{#1}{#2}
\def\@tempa{#3}%
\def\@tempb{#4}%
\ifx\@tempa\@empty\ifx\@tempb\@empty{}{}\else{}{\href{https://doi.org/#4}{url}}\fi\else
{\href{http://www.ams.org/mathscinet-getitem?mr=#3}{MR#3}}\ifx\@tempb\@empty{}\else{; \href{https://doi.org/#4}{url}}\fi\fi}
\newcommand*{\titlefont}{\fontsize{18}{21.6}\selectfont\textbf}
\renewcommand\@author{\ifx\AB@affillist\AB@empty\AB@author\else
      \ifnum\value{affil}>\value{Maxaffil}\def\rlap##1{##1}%
    \AB@authlist\\[\affilsep]\vbox{\AB@affillist}
    \else  \AB@authors\fi\fi}
\begin{document}

\maketitle

\pagestyle{plain}

\begin{center}
\noindent
\begin{minipage}{0.85\textwidth}\parindent=15.5pt

\bigskip

{\small{
\noindent {\bf Abstract.} We consider a semilinear Robin problem driven by the Laplacian plus an indefinite  and unbounded potential. The reaction term is a Carath\'eodory function which is resonant with respect to any nonprincipal eigenvalue both at $\pm \infty$ and 0. Using a variant of the reduction method, we show that the problem has at least two nontrivial smooth solutions.}
\smallskip

\noindent {\bf{Keywords:}} resonant problem, reduction method, regularity theory, indefinite and unbounded potential, local linking.
\smallskip

\noindent{\bf{2010 Mathematics Subject Classification:}} 35J20, 35J60.
}

\end{minipage}
\end{center}

\section{Introduction}

Let $\Omega\subseteq\RR^N$ be a bounded domain with a $C^2$-boundary $\partial\Omega$. In this paper we study the following semilinear Robin problem:

\begin{equation}\label{eq1}
	\left\{\begin{array}{l}
		-\Delta u(z)+\xi(z)u(z) = f(z,u(z)) \ \mbox{in}\ \Omega, \ \\
		\displaystyle\frac{\partial u}{\partial n} + \beta(z)u = 0 \ \mbox{in}\ \partial\Omega\,.
	\end{array}\right\}
\end{equation}

In this problem, the potential function $\xi(\cdot)$ is unbounded and indefinite (that is, sign-changing). So, in problem (\ref{eq1}) the differential operator (on the left-hand side of the equation), is not coercive. The reaction term $f(z,x)$ is a Carath\'eodory function (that is, for all $x\in\RR$, $z\mapsto f(z,x)$ is measurable and for almost all $z\in\Omega$, $x\mapsto f(z,x)$ is continuous) and $f(z,\cdot)$ exhibits linear growth as $x\rightarrow\pm\infty$. In fact, we can have resonance with respect to any nonprincipal eigenvalue of $-\Delta+\xi(z)I$ with the Robin boundary condition. This general structure of the reaction term, makes the use of variational methods problematic. To overcome these difficulties, we develop a variant of the so-called ``reduction method", originally due to Amann \cite{1} and Castro \& Lazer \cite{3}. However, in contrast to the aforementioned works, the particular features of our problem lead to a reduction on an infinite dimensional subspace and this
 is a source of additional technical difficulties. In the boundary condition, $\frac{\partial u}{\partial n}$ is the normal derivative defined by extension of the continuous linear map
$$u\mapsto\frac{\partial u}{\partial n}=(Du,n)_{\RR^N}\ \mbox{for all}\ u\in C^1(\overline{\Omega}),$$
with $n(\cdot)$ being the outward unit normal on $\partial\Omega$. The boundary coefficient $\beta\in W^{1,\infty}(\partial\Omega)$ satisfies $\beta(z)\geq 0$ for all $z\in\partial\Omega$. We can have $\beta\equiv 0$, which corresponds to the Neumann problem.

Recently there have been existence and multiplicity results for semilinear elliptic problems with general potential. We mention the works of Hu \& Papageorgiou \cite{9}, Kyritsi \& Papageorgiou \cite{10}, Papageorgiou \& Papalini \cite{12}, Qin, Tang \& Zhang \cite{17} (Dirichlet problems), Gasinski \& Papageorgiou \cite{6}, Papageorgiou \& R\u adulescu \cite{13, 14} (Neumann problems) and for Robin problems there are the works of Shi \& Li \cite{18} (superlinear reaction), D'Agui, Marano \& Papageorgiou \cite{4} (asymmetric reaction), Hu \& Papageorgiou (logistic reaction) and Papageorgiou \& R\u adulescu \cite{15} (reaction with zeros). In all the aforementioned works the conditions are in many respects more restrictive or different and consequently the mathematical tools are different. It seems that our work here is the first to use this variant of the reduction method on Robin problems.

\section{Mathematical background}

Let $X$ be a Banach space and let $X^*$ be its topological dual. By $\left\langle \cdot,\cdot\right\rangle$ we denote the duality brackets for the pair $(X^*,X)$. Given $\varphi\in C^1(X,\RR)$, we say that $\varphi$ satisfies the ``Cerami condition" (the ``C-condition" for short), if the following property holds
\begin{center}
``Every sequence $\{u_n\}_{n\geq 1}\subseteq X$ such that $\{\varphi(u_n)\}_{n\geq 1}\subseteq\RR$ is bounded and
$$(1+||u_n||)\varphi'(u_n)\rightarrow 0\ \mbox{in}\ X^*,$$
admits a strongly convergent subsequence".
\end{center}

This is a compactness-type condition on the functional $\varphi$ and is more general that the usual Palais-Smale condition. The two notions are equivalent when $\varphi$ is bounded below (see Motreanu, Motreanu \& Papageorgiou \cite[p. 104]{11}).

Our multiplicity result will use the following abstract ``local linking" theorem of Brezis \& Nirenberg \cite{2}.
\begin{theorem}\label{th1}
	Let $X$ be a Banach space such that $X=Y\oplus V$ with ${\rm dim}\, Y<+\infty$. Assume that $\varphi\in C^1(X,\RR)$ satisfies the C-condition, it is bounded below, $\varphi(0)=0$, $\inf\limits_{X}\varphi<0$ and there exists $\rho>0$ such that
	\begin{eqnarray*}
		&&\varphi(y)\leq 0\ \mbox{for all}\ y\in Y\ \mbox{with}\ ||y||\leq\rho,\\
		&&\varphi(v)\geq 0\ \mbox{for all}\ v\in V\ \mbox{with}\ ||v||\leq\rho
	\end{eqnarray*}
	(that is, $\varphi$ has a local linking at $u=0$ with respect to the direct sum $Y\oplus V$). Then $\varphi$ has at least two nontrivial critical points.
\end{theorem}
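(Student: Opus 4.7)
The plan is to produce one critical point by direct minimization and a second by a linking-type minimax based on the local-linking data, then to separate the two by comparing their critical values.

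Since $\varphi$ is bounded below and satisfies the C-condition (which, for functionals bounded below, coincides with the usual Palais--Smale condition), the direct method yields a global minimizer $u_0 \in X$. Because $\varphi(u_0) = \inf_X \varphi < 0 = \varphi(0)$, one has $u_0 \neq 0$, so $u_0$ is already a first nontrivial critical point with negative critical value.

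For the second critical point I would distinguish two cases. If $\dim Y = 0$, the local-linking condition says $\varphi \geq 0$ on a whole neighborhood of $0$, so $0$ is a local minimum; combined with the distinct minimum $u_0$, a standard mountain-pass argument under the C-condition furnishes a critical point $u_1$ at some level $c \geq 0 > \varphi(u_0)$, hence $u_1 \neq u_0$ (and $u_1 \neq 0$ once the mountain-pass geometry is strict). If $\dim Y \geq 1$, set $S = \{y \in Y : \|y\| = \rho\}$ and define
$$\Gamma = \{h \in C(\overline{B}_\rho \cap Y,\, X) : h|_S = \mathrm{id}\}, \qquad c = \inf_{h \in \Gamma}\, \max_{y \in \overline{B}_\rho \cap Y}\, \varphi(h(y)).$$
Using $\dim Y < \infty$, a Brouwer-degree argument shows that the image of any $h \in \Gamma$ must meet $V \cap \overline{B}_\rho \subseteq \{\varphi \geq 0\}$, so $c \geq 0$; together with $\max_S \varphi \leq 0$ from the local linking on $Y$, the deformation lemma (valid under the C-condition) ensures that $c$ is a critical value, with an associated critical point $u_1$.

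The main obstacle is ensuring $u_1 \notin \{0, u_0\}$. The separation $u_1 \neq u_0$ is immediate from $\varphi(u_1) = c \geq 0 > \varphi(u_0)$. The delicate situation is $c = 0$, where a priori $u_1 = 0$ is possible. The standard device is to shrink $\rho$ so that $\max_S \varphi < 0$ strictly and to run the negative pseudo-gradient deformation inside the local-linking neighborhood; the Brouwer-degree intersection argument persists for the smaller ball, so any critical point produced at level $0$ cannot be $0$ itself. Equivalently, in Morse-theoretic language the local linking forces $C_k(\varphi, 0) \neq 0$ for $k = \dim Y$, whereas the global minimum contributes only $C_j(\varphi, u_0) = \delta_{j,0} F$; combined with the fact that $X$ deformation-retracts onto any deep sublevel set under the bounded-below C-condition, this rules out $\{0, u_0\}$ as the complete critical set and delivers the second nontrivial critical point.
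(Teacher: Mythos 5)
The paper itself does not prove this statement; it is imported wholesale from Brezis--Nirenberg \cite{2}, so there is no ``paper's own proof'' to compare against. The comparison below is therefore with the known argument in that reference.

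Your skeleton --- minimize to get $u_0$ with $\varphi(u_0)<0$, then extract a second critical point from the local-linking geometry --- is the right strategy, and the first step is sound. The second step, however, has genuine gaps. First, the Brouwer-degree argument shows only that $h(\overline{B}_\rho\cap Y)$ meets $V$, not $V\cap\overline{B}_\rho$: projecting onto $Y$ gives $y^*$ with $h(y^*)\in V$, but the $V$-component of $h(y^*)$ may have arbitrarily large norm, while the hypothesis $\varphi\geq 0$ is only on $V\cap\overline{B}_\rho$. So $c\geq 0$ is not established as written; one gets only $c\geq\inf_V\varphi$, which can be negative. Second, the device of ``shrink $\rho$ so that $\max_S\varphi<0$ strictly'' is not available: the hypothesis is $\varphi\leq 0$ on $Y\cap\overline{B}_\rho$, not $<0$, and $\varphi$ may vanish identically on $Y\cap\overline{B}_\rho$, in which case $\sup\varphi$ on every smaller sphere is still $0$. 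Third, the Morse-theoretic fallback is too thin: for $k=\dim Y\geq 1$ one indeed has $C_0(\varphi,0)=0$ and $C_k(\varphi,0)\neq 0$ when $0$ is isolated, and $C_q(\varphi,u_0)=\delta_{q,0}$, but the remaining critical groups at $0$ are undetermined, so the Morse relation $\sum_q \dim C_q(\varphi,0)\,t^q=(1+t)Q(t)$ is perfectly consistent with $K_\varphi=\{0,u_0\}$ (take, say, $C_1(\varphi,0)\cong C_2(\varphi,0)$ nontrivial and the rest zero, giving $Q(t)=t$); no contradiction results. The actual Brezis--Nirenberg proof closes precisely these gaps, by separating cases according to whether $0$ is a local extremum and by using a sharpened deformation lemma near the level $0$ (their ``degenerate'' mountain-pass and linking theorems), which produces a critical point on the sphere $\|u\|=\rho$, hence different from both $0$ and $u_0$, when the minimax level degenerates to $0$. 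This is the technical core that your sketch replaces by heuristics.
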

\begin{remark}
	The result is true even if one of the component subspaces $Y$ or $V$ is trivial. Moreover, if ${\rm dim}\,V=0$, then we can allow $Y$ to be infinite dimensional.
\end{remark}
	
	We will use the following spaces:
	\begin{itemize}
		\item the Sobolev space $H^1(\Omega)$;
		\item the Banach space $C^1(\overline{\Omega})$; and
		\item the ``boundary" Lebesgue spaces $L^r(\partial\Omega)\ 1\leq r\leq\infty$.
	\end{itemize}
	
	The Sobolev space $H^1(\Omega)$ is a Hilbert space with the following inner product
	$$(u,v)=\int_{\Omega}uvdz+\int_{\Omega}(Du,Dv)_{\RR^N}dz\ \mbox{for all}\ u,v\in H^1(\Omega).$$
	
	By $||\cdot||$ we denote the norm corresponding to this inner product, that is,
	$$||u||=[||u||^2_2+||Du||_2^2]^{1/2}\ \mbox{for all}\ u,v\in H^1(\Omega).$$
	
	On $\partial\Omega$ we consider the $(N-1)$-dimensional Hausdorff (surface) measure denoted by $\sigma(\cdot)$. Using this measure on $\partial\Omega$, we can define in the usual way the Lebesgue spaces $L^r(\partial\Omega)$, $1\leq r\leq\infty$. From the theory of Sobolev spaces we know that there exists a unique continuous linear map $\gamma_0:H^1(\Omega)\rightarrow L^2(\partial\Omega)$, known as the ``trace map", which satisfies
	$$\gamma_0(u)=u|_{\partial\Omega}\ \mbox{for all}\ u\in H^1(\Omega)\cap C(\overline{\Omega}).$$

	So, the trace map assigns ``boundary values" to any Sobolev function (not just to the regular ones). This map is compact into $L^r(\partial\Omega)$ for all $r\in\left[1,\frac{2(N-1)}{N-2}\right)$ if $N\geq 3$ and into $L^r(\partial\Omega)$ for all $r\geq 1$ if $N=1,2$. Also, we have
	$${\rm ker}\, \gamma_0=H^1_0(\Omega)\ \mbox{and}\ {\rm im}\, \gamma_0=H^{\frac{1}{2},2}(\partial\Omega).$$
	
	In what follows, for the sake of notational simplicity, we shall drop the use the trace map $\gamma_0$. The restrictions of all Sobolev functions on $\partial\Omega$, are understood in the sense of traces.
	
	Next, we recall some basic facts about the spectrum of the differential operator $-\Delta+\xi(z)I$ with the Robin boundary condition. So, we consider the following linear eigenvalue problem:
	\begin{eqnarray}\label{eq2}
		\left\{\begin{array}{ll}
			-\Delta u(z)+\xi(z)u(z)=\hat{\lambda}u(z)&\mbox{in}\ \Omega,\\
			\displaystyle\frac{\partial u}{\partial n}+\beta(z)u=0&\mbox{on}\ \partial\Omega
		\end{array}\right\}
	\end{eqnarray}

Our conditions on the data of (\ref{eq2}) are the following:
\begin{itemize}
	\item	$\xi\in L^s(\Omega)$ with $s>N$; and
	\item $\beta\in W^{1,\infty}(\partial\Omega)$ with $\beta(z)\geq 0$ for all $z\in\partial\Omega$.
\end{itemize}

Let $\gamma:H^1(\Omega)\rightarrow \RR$ be the $C^1$-functional defined by
$$\gamma(u)=||Du||_2^2+\int_{\Omega}\xi(z)u^2dz+\int_{\partial\Omega}\beta(z)u^2d\sigma\ \mbox{for all}\ u\in H^1(\Omega).$$

By D'Agui, Marano \& Papageorgiou \cite{4}, we know that there exists $\mu>0$ such that
\begin{equation}\label{eq3}
	\gamma(u)+\mu||u||^2_2\geq c_0||u||^2\ \mbox{for all}\ u\in H^1(\Omega),\ \mbox{and some}\ c_0>0.
\end{equation}

Using (\ref{eq3}) and the spectral theorem for compact self-adjoint operators on a Hilbert space, we produce the spectrum $\sigma_0(\xi)$ of (\ref{eq2}) and we have that $\sigma_0(\xi)=\{\hat{\lambda}_k\}_{k\geq 1}$ a sequence of distinct eigenvalues with $\hat{\lambda}_k\rightarrow+\infty$ as $k\rightarrow+\infty$. By $E(\hat{\lambda}_k)$ (for all $k\in\NN$), we denote the eigenspace corresponding to the eigenvalue $\hat{\lambda}_k$. We know that $E(\hat{\lambda}_k)$ is finite dimensional. Moreover, the regularity theory of Wang \cite{19} implies that $E(\hat{\lambda}_k)\subseteq C^1(\overline{\Omega})$ for all $k\in\NN$. The Sobolev space $H^1(\Omega)$ admits the following orthogonal direct sum decomposition
$$H^1(\Omega)=\overline{{\underset{\mathrm{k\geq 1}}\oplus}E(\hat{\lambda}_k)}.$$

The elements of $\sigma_0(\xi)$ have the following properties:
\begin{itemize}
	\item $\hat{\lambda}_1$ is simple (that is, ${\rm dim}\, E(\hat{\lambda}_1)=1$).\\
	\begin{eqnarray}
		&&\bullet\ \ \hat{\lambda}_1=\inf\left[\frac{\gamma(u)}{||u||^2_2}:u\in H^1(\Omega),u\neq 0\right].\hspace{7cm}\label{eq4}\\
		&&\bullet\ \ \hat{\lambda}_m=\inf\left[\frac{\gamma(u)}{||u||^2_2}:u\in\overline{{\underset{\mathrm{k\geq m}}\oplus}E(\hat{\lambda}_k)},u\neq 0\right]\hspace{7cm}\nonumber\\
		&&\hspace{1cm}=\sup\left[\frac{\gamma(u)}{||u||^2_2}:u\in\overset{m}{\underset{\mathrm{k=1}}\oplus}E(\hat{\lambda}_k),u\neq 0\right]\ \mbox{for}\ m\geq 2.\label{eq5}
	\end{eqnarray}
\end{itemize}

The infimum in (\ref{eq4}) is realized on $E(\hat{\lambda}_1)$, while both the infimum and supremum in (\ref{eq5}) are realized on $E(\hat{\lambda}_m)$. It follows that the elements of $E(\hat{\lambda}_1)$ have fixed sign, while those of $E(\hat{\lambda}_m)$ ($m\geq 2$) are nodal (sign-changing). The eigenspaces have the so-called ``Unique Continuation Property" (UCP for short) which says that if $u\in E(\hat{\lambda}_k)$ and $u(\cdot)$ vanishes on a set of positive Lebesgue measure, then $u\equiv 0$. As a consequence of the UCP, we have the following useful inequalities (see D'Agui, Marano \& Papageorgiou \cite{4}).
\begin{lemma}\label{lem2}
	\begin{itemize}
		\item[(a)] If $\eta\in L^{\infty}(\Omega),\ \eta(z)\geq\hat{\lambda}_m$ for almost all $z\in\Omega,\ m\in\NN$ and $\eta\neq \hat{\lambda}_m$, then there exists $c_1>0$ such that
		$$\gamma(u)-\int_{\Omega}\eta(z)u^2dz\leq-c_1||u||^2\ \mbox{for all}\ u\in\overset{m}{\underset{\mathrm{k=1}}\oplus}E(\hat{\lambda}_k).$$
		\item[(b)] If $\eta\in L^{\infty}(\Omega),\ \eta(z)\leq\hat{\lambda}_m$ for almost all $z\in\Omega$, $m\in\NN$ and $\eta\neq\hat{\lambda}_m$ then there exists $c_2>0$ such that
		$$\gamma(u)-\int_{\Omega}\eta(z)u^2dz\geq c_2||u||^2\ \mbox{for all}\ u\in\overline{{\underset{\mathrm{k\geq m}}\oplus E(\hat{\lambda}_k)}}.$$
	\end{itemize}
\end{lemma}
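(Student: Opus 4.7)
The plan is to prove both parts by contradiction, combining the variational characterizations (\ref{eq5}) of $\hat{\lambda}_m$ with the UCP for the eigenspaces. In each case one already has the correct non-strict inequality; the work consists of upgrading it to a bound proportional to $||u||^2$ by a compactness argument in which the equality case is ruled out by UCP.

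For part (a), I would work on the finite-dimensional space $Y_m := \overset{m}{\underset{k=1}{\oplus}} E(\hat{\lambda}_k)$. The supremum characterization in (\ref{eq5}) gives $\gamma(u) \leq \hat{\lambda}_m ||u||_2^2$, hence $\gamma(u) - \int_\Omega \eta u^2 \, dz \leq \int_\Omega(\hat{\lambda}_m - \eta)u^2 \, dz \leq 0$. Suppose the conclusion fails: there exist $u_n \in Y_m$ with $||u_n|| = 1$ and $\gamma(u_n) - \int_\Omega \eta u_n^2 \, dz \to 0$. Finite-dimensionality yields $u_n \to u$ along a subsequence with $||u|| = 1$. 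Passing to the limit forces both $\gamma(u) = \hat{\lambda}_m ||u||_2^2$ and $\int_\Omega(\eta - \hat{\lambda}_m)u^2 \, dz = 0$; the first, via the supremum in (\ref{eq5}), gives $u \in E(\hat{\lambda}_m)$, while the second, combined with $\eta \geq \hat{\lambda}_m$ and $\eta \not\equiv \hat{\lambda}_m$ on a set of positive measure, forces $u$ to vanish on that set. UCP then yields $u \equiv 0$, contradicting $||u|| = 1$.

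For part (b) the same strategy applies but the space $V_m := \overline{\underset{k \geq m}{\oplus} E(\hat{\lambda}_k)}$ is infinite dimensional, so strong sequential compactness must be replaced by weak $H^1$-convergence plus compactness of the $L^2$-embedding and of the trace map. Assume $u_n \in V_m$ with $||u_n|| = 1$ and $\gamma(u_n) - \int_\Omega \eta u_n^2 \, dz \to 0$; extract $u_n \rightharpoonup u$ in $H^1(\Omega)$, with strong convergence in $L^2(\Omega)$, in $L^{2s/(s-1)}(\Omega)$ (valid since $s > N$, so that the $\xi$-integral passes to the limit by H\"older), and in $L^2(\partial\Omega)$ (compact trace, handling the $\beta$-integral). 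The crucial step is to show $u \neq 0$: from (\ref{eq3}),
$$c_0 = c_0 ||u_n||^2 \leq \gamma(u_n) + \mu ||u_n||_2^2 = \int_\Omega \eta u_n^2 \, dz + \mu ||u_n||_2^2 + o(1) \leq (||\eta||_\infty + \mu) ||u_n||_2^2 + o(1),$$
so $||u_n||_2^2$ is bounded below by a positive constant, and $||u||_2^2 > 0$. Weak lower semicontinuity of $||Du||_2^2$ together with the strong-convergence transitions above then yields $\gamma(u) - \int_\Omega \eta u^2 \, dz \leq 0$. Combining with $\gamma(u) \geq \hat{\lambda}_m ||u||_2^2 \geq \int_\Omega \eta u^2 \, dz$ (from (\ref{eq5}) and $\eta \leq \hat{\lambda}_m$), equality holds throughout, forcing $u \in E(\hat{\lambda}_m)$ and $u = 0$ on the positive-measure set $\{\eta < \hat{\lambda}_m\}$. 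UCP delivers $u \equiv 0$, contradicting $u \neq 0$.

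The main obstacle is precisely the infinite-dimensional step in (b): the argument would collapse if the weak limit $u$ were allowed to be zero. The use of (\ref{eq3}) to convert the normalization $||u_n|| = 1$ into a uniform lower bound on $||u_n||_2$ is the key ingredient that rescues compactness, and without the hypothesis $\xi \in L^s(\Omega)$ with $s > N$ (which yields the Hölder/Sobolev step above) one could not pass to the limit in the indefinite potential term. Everything else is essentially bookkeeping around the variational characterizations (\ref{eq4})-(\ref{eq5}) and the UCP.
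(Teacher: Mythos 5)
The paper does not give a proof of Lemma~\ref{lem2}; it is stated as a consequence of the UCP with a citation to D'Agui, Marano \& Papageorgiou \cite{4}, so there is no in-paper argument to compare against. Your proof is correct and is the standard normalization-plus-UCP argument that such statements are proved by. Part (a) is routine on the finite-dimensional $Y_m$, and in part (b) you correctly identify and resolve the genuine obstacle: preventing the weak limit of the normalized sequence from being zero. Your use of (\ref{eq3}) together with $\gamma(u_n)=\int_\Omega\eta u_n^2\,dz+o(1)$ to produce a uniform positive lower bound on $\|u_n\|_2$ is exactly the right mechanism, and the remaining steps (weak lower semicontinuity of $\|Du\|_2^2$, compactness of $H^1(\Omega)\hookrightarrow L^{2s/(s-1)}(\Omega)$ which requires only $s>N/2$ though the paper assumes $s>N$, compactness of the trace into $L^2(\partial\Omega)$, weak closedness of the Hilbert subspace $V_m$, and the characterization of equality in (\ref{eq5}) forcing the limit into $E(\hat\lambda_m)$) all go through as you state. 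Your closing remark about the role of $\xi\in L^s(\Omega)$ is a fair assessment of where the hypotheses on the data enter.
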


Given $m\in\NN$, let $H_-=\overset{m}{\underset{\mathrm{k=1}}\oplus}E(\hat{\lambda}_k)$, $H^0=E(\hat{\lambda}_{m+1})$, $H_+=\overline{{\underset{\mathrm{k\geq m+2}}\oplus E(\hat{\lambda}_k)}}$. We have the following orthogonal direct sum decomposition
$$H^1(\Omega)=H_-\oplus H^0\oplus H_+.$$

So, every $u\in H^1(\Omega)$ admits a unique sum decomposition of the form
$$u=\bar{u}+u^0+\hat{u}\ \mbox{with}\ \bar{u}\in H_-,\ u^0\in H^0,\ \hat{u}\in H_+.$$

Also, we set
$$V=H^0\oplus H_+.$$

Finally, let us fix our notation. By $|\cdot|_N$ we denote the Lebesgue measure on $\RR^N$ and by $A\in\mathcal{L}(H^1(\Omega),H^1(\Omega)^*)$ the linear operator defined by
$$\left\langle A(u),h\right\rangle=\int_{\Omega}(Du,Dh)_{\RR^N}dz\ \mbox{for all}\ u,h\in H^1(\Omega)$$
(by $\left\langle \cdot,\cdot\right\rangle$ we denote the duality brackets for the pair $(H^1(\Omega)^*,H^1(\Omega))$). Also, given a measurable function $f:\Omega\times\RR\rightarrow\RR$ (for example a Carath\'eodory function), we set
$$N_f(u)(\cdot)=f(\cdot,u(\cdot))\ \mbox{for all}\ u\in H^1(\Omega)$$
(the Nemytski map corresponding to $f$). Evidently, $z\mapsto N_f(u)(z)$ is measurable. For $\varphi\in C^1(X,\RR)$, we set
$$K_{\varphi}=\{u\in X:\varphi'(u)=0\}$$
(the critical set of $\varphi$).

\section{Pair of nontrivial solutions}

The hypotheses on the data of (\ref{eq1}) are the following:

\begin{itemize}
	\item
$H(\xi)$: $\xi\in L^s(\Omega)$ with $s>N$; and

\item
$H(\beta)$: $\beta\in W^{1,\infty}(\partial\Omega)$ with $\beta(z)\geq 0$ for all $z\in\partial\Omega$.
\end{itemize}

\begin{remark}
	We can have $\beta\equiv 0$ and this case corresponds to the Neumann problem.
\end{remark}

$H(f)$: $f:\Omega\times\RR\rightarrow\RR$ is a Carath\'eodory function such that $f(z,0)=0$ for almost all $z\in\Omega$ and
\begin{itemize}
		\item[(i)] $|f(z,x)|\leq a(z)(1+|x|)$ for almost all $z\in\Omega$, and all $x\in\RR$ with $a\in L^{\infty}(\Omega)_+$;
		\item[(ii)] there exist $m\in\NN$ and $\eta\in L^{\infty}(\Omega)$ such that
		\begin{eqnarray*}
			&&\eta(z)\geq\hat{\lambda}_m\ \mbox{for almost all}\ z\in\Omega,\eta\not\equiv\hat{\lambda}_m,\\
			&&(f(z,x)-f(z,x'))(x-x')\geq\eta(z)(x-x')^2\ \mbox{for almost all}\ z\in\Omega,\ \mbox{and all}\ x,x'\in\RR;
		\end{eqnarray*}
		\item[(iii)] if $F(z,x)=\int^x_0f(z,s)ds$, then $\limsup\limits_{x\rightarrow\pm\infty}\frac{2F(z,x)}{x^2}\leq \hat{\lambda}_{m+1}$ and
		$$\lim\limits_{x\rightarrow\pm\infty}[f(z,x)x-2F(z,x)]=+\infty\ \mbox{uniformly for almost all}\ z\in\Omega;$$
		\item[(iv)] there exist $l\in\NN$, $l\geq m+2$, a function $\vartheta\in L^{\infty}(\Omega)$ and $\delta>0$ such that
		\begin{eqnarray*}
			&&\vartheta(z)\leq\hat{\lambda}_l\ \mbox{for almost all}\ z\in\Omega,\ \vartheta\not\equiv\hat{\lambda}_l,\\
			&&\hat{\lambda}_{l-1}x^2\leq f(z,x)x\leq\vartheta(z)x^2\ \mbox{for almost all}\ z\in\Omega,\ \mbox{and all}\ |x|\leq\delta.
		\end{eqnarray*}
\end{itemize}

Let $\varphi:H^1(\Omega)\rightarrow\RR$ be the energy (Euler) functional for problem (\ref{eq1}) defined by
$$\varphi(u)=\frac{1}{2}\gamma(u)-\int_{\Omega}F(z,u)dz\ \mbox{for all}\ u\in H^1(\Omega).$$

Evidently, $\varphi\in C^1(H^1(\Omega))$.

Recall that
$$H^1(\Omega)=H_-\oplus H^0\oplus H_+$$
with $H_-=\overset{m}{\underset{\mathrm{k=1}}\oplus}E(\hat{\lambda}_k).\ H^0=E(\hat{\lambda}_{m+1}),\ H_+=\overline{{\underset{\mathrm{k\geq m+2}}\oplus}E(\hat{\lambda}_k)}$ and
$$V=H^0\oplus H_+.$$

The next proposition is crucial in the implementation of the reduction method.
\begin{proposition}\label{prop3}
	If hypotheses $H(\xi),H(\beta),H(f)$ hold, then there exists a continuous map $\tau:V\rightarrow H_-$ such that
	$$\varphi(v+\tau(v))=\max[\varphi(v+y):y\in H_-]\ \mbox{for all}\ v\in V.$$
\end{proposition}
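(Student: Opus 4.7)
The plan is to realize $\tau(v)$ as the unique maximizer of the finite-dimensional problem $\max\{\varphi(v+y):y\in H_-\}$, using strong concavity of $y\mapsto\varphi(v+y)$ on $H_-$ that comes from the monotonicity hypothesis $H(f)(ii)$ combined with Lemma \ref{lem2}(a).

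First, for every fixed $v\in V$ and all $y_1,y_2\in H_-$, direct computation together with $(v+y_1)-(v+y_2)=y_1-y_2$ gives
$$\langle\varphi'(v+y_1)-\varphi'(v+y_2),y_1-y_2\rangle=\gamma(y_1-y_2)-\int_\Omega\bigl(f(z,v+y_1)-f(z,v+y_2)\bigr)(y_1-y_2)\,dz.$$
By $H(f)(ii)$ the integrand is bounded below by $\eta(z)(y_1-y_2)^2$; since $y_1-y_2\in H_-=\bigoplus_{k=1}^m E(\hat{\lambda}_k)$ and $\eta\geq\hat{\lambda}_m$ with $\eta\not\equiv\hat{\lambda}_m$, Lemma \ref{lem2}(a) then yields
$$\langle\varphi'(v+y_1)-\varphi'(v+y_2),y_1-y_2\rangle\leq -c_1\|y_1-y_2\|^2.$$
Integrating this strong monotonicity inequality along the segment from $0$ to $y$ inside $H_-$ produces the quantitative concavity bound
$$\varphi(v+y)\leq\varphi(v)+\langle\varphi'(v),y\rangle-\tfrac{c_1}{2}\|y\|^2\quad\text{for every }y\in H_-,$$
which is the cornerstone of the argument.

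From this bound, $y\mapsto\varphi(v+y)$ is strictly concave and tends to $-\infty$ as $\|y\|\to\infty$ in the finite-dimensional space $H_-$, so continuity delivers a maximizer and strict concavity makes it unique; I set $\tau(v)$ equal to this maximizer. For continuity of $\tau$, suppose $v_n\to v$ in $V$. Evaluating the above estimate at $y=\tau(v_n)$ and using the maximizing property $\varphi(v_n+\tau(v_n))\geq\varphi(v_n+0)=\varphi(v_n)$ yields $\|\tau(v_n)\|\leq (2/c_1)\|\varphi'(v_n)\|_\ast$, which is bounded because $\varphi'$ is continuous. Finite-dimensionality of $H_-$ then extracts a subsequence $\tau(v_{n_k})\to y^\ast$; passing to the limit in $\varphi(v_{n_k}+\tau(v_{n_k}))\geq\varphi(v_{n_k}+y)$ for arbitrary $y\in H_-$ and invoking uniqueness of the maximizer forces $y^\ast=\tau(v)$, and a standard subsequence-of-subsequences argument upgrades this to $\tau(v_n)\to\tau(v)$.

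The principal obstacle is the first step: $\xi$ is indefinite, so $\gamma$ is of indefinite sign on $H^1(\Omega)$ and no elementary comparison makes $\varphi(v+\cdot)$ concave on $H_-$. The coercivity constant $c_1$ emerges only by pairing the upper bound $\gamma(h)\leq\int_\Omega\eta\,h^2\,dz$ on $H_-$ from Lemma \ref{lem2}(a) with the quadratic lower bound on the $f$-difference from $H(f)(ii)$; the subtlety is precisely that $H(f)(ii)$ must force enough concavity to dominate $\gamma$ on the whole of $H_-$, which is why the hypothesis demands $\eta\geq\hat{\lambda}_m$ rather than merely $\eta\geq\hat{\lambda}_1$. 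Once this inequality is secured, the remaining maximization and continuity arguments are routine and depend only on finite-dimensionality of $H_-$.
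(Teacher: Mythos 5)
Your proof is correct and follows the same overall plan as the paper — establish the strong monotonicity estimate $\langle\varphi'(v+y_1)-\varphi'(v+y_2),y_1-y_2\rangle\leq -c_1\|y_1-y_2\|^2$ on $H_-$ via $H(f)(ii)$ and Lemma \ref{lem2}(a), deduce existence and uniqueness of the maximizer, bound $\{\tau(v_n)\}$, then pass to the limit using finite dimensionality and a subsequence-of-subsequences argument — but it differs at the two technical junctions in a way worth noting. For existence and uniqueness, the paper invokes the abstract surjectivity theorem for maximal monotone and coercive operators (Hu \& Papageorgiou) applied to $-\hat{\varphi}'_v$, whereas you integrate the monotonicity inequality along segments to obtain the explicit quadratic bound $\varphi(v+y)\leq\varphi(v)+\langle\varphi'(v),y\rangle-\tfrac{c_1}{2}\|y\|^2$ and then use the direct method in the finite-dimensional space $H_-$; this is more elementary and avoids the operator-theoretic machinery entirely. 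For the a priori bound on $\tau(v_n)$, the paper first establishes $\tau(0)=0$ separately and then uses the criticality identity $\langle\varphi'(v_n+\tau(v_n)),\tau(v_n)\rangle=0$ together with the monotonicity inequality, whereas your concavity bound, combined with the trivial comparison $\varphi(v_n+\tau(v_n))\geq\varphi(v_n)$, gives $\|\tau(v_n)\|\leq(2/c_1)\|\varphi'(v_n)\|_\ast$ in one stroke and dispenses with the $\tau(0)=0$ step. Both routes are sound; yours trades the appeal to maximal monotone operator theory for a self-contained integrated concavity estimate that also does double duty in the continuity argument, which is a genuine simplification, while the paper's version has the advantage of producing the identity $p_{H_-^*}\varphi'(v+\tau(v))=0$ directly, which it reuses later (in Proposition \ref{prop5}).
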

\begin{proof}
	We fix $v\in V$ and consider the $C^1$-functional $\varphi_v:H^1(\Omega)\rightarrow\RR$ defined by
	$$\varphi_v(u)=\varphi(v+u)\ \mbox{for all}\ u\in H^1(\Omega).$$
	
	By $i_{H_-}:H_-\rightarrow H^1(\Omega)$ we denote the embedding of $H_-$ into $H^1(\Omega)$. Let
	$$\hat{\varphi}_v=\varphi_v\circ i_{H_-}.$$
	
	From the chain rule, we have
	\begin{equation}\label{eq6}
		\hat{\varphi}'_v=p_{H^*_-}\circ\varphi'_v,
	\end{equation}
	with $p_{H^*_-}$ being the orthogonal projection of the Hilbert space $H^1(\Omega)$ onto $H^*_-$. By $\left\langle \cdot,\cdot\right\rangle_{H_-}$ we denote the duality brackets for the pair $(H^*_-,H_-)$. For $y,\ y'\in H_-$, we have
	\begin{eqnarray}\label{eq7}
		&&\left\langle \hat{\varphi}'_v(y)-\hat{\varphi}'_v(y'),y-y'\right\rangle_{H_-}\nonumber\\
		&=&\left\langle \varphi'_v(y)-\varphi'_v(y'),y-y'\right\rangle\ (\mbox{see (\ref{eq6})})\nonumber\\
		&=&\gamma(y-y')-\int_{\Omega}(f(z,v+y)-f(z,v+y'))(y-y')dz\nonumber\\
		&\leq&\gamma(y-y')-\int_{\Omega}\eta(z)(y-y')^2dz\ (\mbox{see hypothesis}\ H(f)(ii))\nonumber\\
		&\leq&-c_1||y-y'||^2\ (\mbox{see Lemma \ref{lem2}}).
	\end{eqnarray}
	
	This implies that
	\begin{equation}\label{eq8}
		-\hat{\varphi}'_v\ \mbox{is strongly monotone and therefore}\ -\hat{\varphi}_v\ \mbox{is strictly convex}.
	\end{equation}
	
	We have
	\begin{eqnarray}\label{eq9}
		\left\langle \hat{\varphi}'_v(y),y\right\rangle_{H_-}&=&\left\langle \varphi'_v(y),y\right\rangle\nonumber\\
		&=&\left\langle \varphi'_v(y)-\varphi'_v(0),y\right\rangle+\left\langle \varphi'_v(0),y\right\rangle\nonumber\\
		&\leq&-c_1||y||^2+c_3||y||\ \mbox{for some}\ c_3>0\ (\mbox{see (\ref{eq7})}),\nonumber\\
		\Rightarrow-\hat{\varphi}'_v\ \mbox{is coercive}&&
	\end{eqnarray}
	
	The continuity and monotonicity of $-\hat{\varphi}'_v$ (see (\ref{eq8})), imply that
	\begin{equation}\label{eq10}
		-\hat{\varphi}'_v\ \mbox{is maximal monotone}.
	\end{equation}
	
	However, a maximal monotone and coercive map is surjective (see, for example, Hu \& Papageorgiou \cite[p. 322]{8}). So,  we infer from (\ref{eq9}) and (\ref{eq10}) that there is a unique $y_0\in H_-$ such that
	\begin{equation}\label{eq11}
		\hat{\varphi}'_v(y_0)=0\ (\mbox{see (\ref{eq8})}).
	\end{equation}
	
	Moreover, $y_0$ is the unique maximizer of the function $\hat\varphi_v$. So, we can define the map $\tau:V\rightarrow H_-$ by setting $\tau(v)=y_0$. Then we have
	\begin{eqnarray}
		&&\varphi(v+\tau(v))=\max[\varphi(v+y):y\in H_-],\label{eq12}\\
		&\Rightarrow&p_{H^*_-}\varphi'(v+\tau(v))=0\ (\mbox{see (\ref{eq11}) and (\ref{eq6})}).\label{eq13}
	\end{eqnarray}
	
	We need to show that the map $\tau:V\rightarrow H_-$ is continuous. To this end, let $v_n\rightarrow v$ in $V$. First, note that if $\bar{u}\in H_-$, then
	\begin{eqnarray*}
		\varphi(\bar{u})&=&\frac{1}{2}\gamma(\bar{u})-\int_{\Omega}F(z,\bar{u})dz\\
		&\leq&\frac{1}{2}\gamma(\bar{u})-\frac{1}{2}\int_{\Omega}\eta(z)\bar{u}^2dz\ (\mbox{see hypothesis}\ H(f)(ii))\\
		&\leq&-c_1||\bar{u}||^2\ (\mbox{see Lemma \ref{lem2}}),\\
		\Rightarrow\tau(0)=0.
	\end{eqnarray*}
	
Since $\varphi\in C^1(H^1(\Omega))$ and $\varphi'(u)=\gamma'(u)-N_f(u)$, it follows that $\varphi'$ is bounded on bounded sets of $H^1(\Omega)$. Therefore
$$||\varphi'(v_n)||_*\leq c_4$$
with $c_4>0$ independent of $n\in\NN$ (recall that $v_n\rightarrow v$ in $H^1(\Omega)$).

	Then we have
	\begin{eqnarray*}
		0&=&\left\langle \varphi'(v_n+\tau(v_n)),\tau(v_n)\right\rangle\ (\mbox{see (\ref{eq13})})\\
		&=&\left\langle \varphi'(v_n+\tau(v_n))-\varphi'(v_n+\tau(0)),\tau(v_n)\right\rangle+\left\langle \varphi'(v_n+\tau(0)),\tau(v_n)\right\rangle\\
		&\leq&-c_1||\tau(v_n)||^2+c_4||\tau(v_n)||,\  \mbox{for all}\ n\in\NN\ (\mbox{see (\ref{eq7})})\\
		\Rightarrow&&\{\tau(v_n)\}_{n\geq 1}\subseteq H_-\ \mbox{is bounded}.
	\end{eqnarray*}
	
	By passing to a suitable subsequence if necessary and using the finite dimensionality of $H_-$, we can infer that
	\begin{equation}\label{eq14}
		\tau(v_n)\rightarrow\hat{y}\ \mbox{in}\ H^1(\Omega),\ \hat{y}\in H_-.
	\end{equation}
	
	We have
	\begin{eqnarray*}
		&&\varphi(v_n+\tau(v_n))\leq\varphi(v_n+y)\ \mbox{for all}\ y\in H_-,\ \mbox{all}\ n\in\NN\ (\mbox{see (\ref{eq12})}),\\
		&\Rightarrow&\varphi(v+\hat{y})\leq\varphi(v+y)\ \mbox{for all}\ y\in H_-\ (\mbox{see (\ref{eq14}) and recall that}\ \varphi\ \mbox{is continuous}),\\
		&\Rightarrow&\hat{y}=\tau(v).
	\end{eqnarray*}
	
	By the Urysohn convergence criterion (see, for example, Gasinski \& Papageorgiou \cite[p. 33]{7}),  we have for the original sequence
	\begin{eqnarray*}
		&&\tau(v_n)\rightarrow \tau(v)\ \mbox{in}\ H_-,\\
		&\Rightarrow&\tau(\cdot)\ \mbox{is continuous.}
	\end{eqnarray*}
\end{proof}

Consider the functional $\tilde{\varphi}:V\rightarrow\RR$ defined by
$$\tilde{\varphi}(v)=\varphi(v+\tau(v))\ \mbox{for all}\ v\in V.$$
\begin{proposition}\label{prop4}
	If hypotheses $H(\xi), H(\beta), H(f)$ hold, then $\tilde{\varphi}\in C^1(V,\RR)$ and $\tilde{\varphi}'(v)=p_{V^*}\varphi'(v+\tau(v))$ for all $v\in V$ (here $p_{V^*}$ denotes the orthogonal projection of the Hilbert space $H^1(\Omega)^*$ onto $V^*$).
\end{proposition}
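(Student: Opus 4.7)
The plan is to use a squeeze argument based on the maximizer property of $\tau$ established in Proposition \ref{prop3}, combined with the $C^1$-smoothness of $\varphi$ on $H^1(\Omega)$. A key feature of this approach is that it bypasses the need to prove any Lipschitz-type regularity for $\tau$, which would be delicate given that $f$ is only Carath\'eodory.

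Fix $v \in V$ and let $h \in V$ be small. Since $\tilde{\varphi}(v+h) = \max_{y \in H_-}\varphi(v+h+y) \geq \varphi(v+h+\tau(v))$, I obtain the lower bound
$$\tilde{\varphi}(v+h) - \tilde{\varphi}(v) \geq \varphi(v+\tau(v)+h) - \varphi(v+\tau(v)),$$
while the analogous inequality $\tilde{\varphi}(v) \geq \varphi(v+\tau(v+h))$ gives the matching upper bound
$$\tilde{\varphi}(v+h) - \tilde{\varphi}(v) \leq \varphi(v+\tau(v+h)+h) - \varphi(v+\tau(v+h)).$$
I would then expand each side via the identity $\varphi(u+h) - \varphi(u) = \langle \varphi'(u), h\rangle + R(u,h)$ with remainder bound $|R(u,h)| \leq \|h\| \sup_{s\in[0,1]}\|\varphi'(u+sh) - \varphi'(u)\|_*$. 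For the lower bound, with $u = v+\tau(v)$, the remainder is $o(\|h\|)$ directly from continuity of $\varphi'$. For the upper bound, with $u_h = v+\tau(v+h)$, the continuity of $\tau$ from Proposition \ref{prop3} gives $u_h \to v+\tau(v)$ as $h\to 0$, so $R(u_h,h) = o(\|h\|)$, and moreover $\langle \varphi'(u_h) - \varphi'(v+\tau(v)), h\rangle = o(\|h\|)$ again by continuity of $\varphi'$ on the relevant bounded neighborhood.

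Consequently both bounds coincide with $\langle \varphi'(v+\tau(v)), h\rangle + o(\|h\|)$, and since $h \in V$ the orthogonal decomposition $H^1(\Omega)^* = H_-^* \oplus V^*$ reduces this to $\langle p_{V^*}\varphi'(v+\tau(v)), h\rangle + o(\|h\|)$. The squeeze then yields Fr\'echet differentiability with $\tilde{\varphi}'(v) = p_{V^*}\varphi'(v+\tau(v))$, and continuity of $v \mapsto \tilde{\varphi}'(v)$ follows at once from the continuity of $\tau$, of $\varphi'$, and of the projection $p_{V^*}$, giving $\tilde{\varphi} \in C^1(V,\mathbb{R})$. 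The one delicate point is the uniformity of $R(u_h,h) = o(\|h\|)$; this is handled by observing that for $h$ sufficiently small, every point $u_h + sh$ lies in a fixed bounded neighborhood of $v+\tau(v)$ on which $\varphi'$ is uniformly continuous.
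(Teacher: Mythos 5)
Your proof is correct and rests on the same key idea as the paper: a two-sided squeeze of the increment of $\tilde{\varphi}$ using the maximizer property of $\tau$ from Proposition \ref{prop3} together with the continuity of $\tau$ and of $\varphi'$. The paper phrases this via one-sided directional difference quotients (establishing Gateaux differentiability, then deducing $C^1$ from continuity of the Gateaux derivative), whereas you go directly for Fr\'echet differentiability with explicit remainder estimates, but this is the same argument in slightly different bookkeeping.
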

\begin{proof}
	Let $v,h\in V$ and $t>0$. We have
	\begin{eqnarray}\label{eq15}
		&&\frac{1}{t}\left[\tilde{\varphi}(v+th)-\tilde{\varphi}(v)\right]\nonumber\\
		&\geq&\frac{1}{t}\left[\varphi(v+th+\tau(v))-\varphi(v+\tau(v))\right]\ (\mbox{see (\ref{eq12})}),\nonumber\\
		&\Rightarrow&\liminf\limits_{t\rightarrow 0^+}\frac{1}{t}\left[\tilde{\varphi}(v+th)-\tilde{\varphi}(v)\right]\geq\left\langle \varphi'(v+\tau(v)),h\right\rangle.
	\end{eqnarray}
	
	Also, we have
	\begin{eqnarray}\label{eq16}
		&&\frac{1}{t}\left[\tilde{\varphi}(v+th)-\tilde{\varphi}(v)\right]\nonumber\\
		&\leq&\frac{1}{t}\left[\varphi(v+th+\tau(v+th))-\varphi(v+\tau(v+th))\right]\nonumber\\
		&\Rightarrow&\limsup\limits_{t\rightarrow 0^+}\frac{1}{t}\left[\tilde{\varphi}(v+th)-\tilde{\varphi}(v)\right]\leq\left\langle \varphi'(v+\tau(v)),h\right\rangle\\
		&&(\mbox{recall that}\ \tau(\cdot)\ \mbox{is continuous, see Proposition \ref{prop3} and that}\ \varphi\in C^1(H^1(\Omega),\RR)).\nonumber
	\end{eqnarray}
	
	From (\ref{eq15}) and (\ref{eq16}) it follows that
	\begin{equation}\label{eq17}
		\lim\limits_{t\rightarrow 0^+}\frac{1}{t}\left[\tilde{\varphi}(v+th)-\tilde{\varphi}(v)\right]=\langle \varphi'(v+\tau(v)),h\rangle\ \mbox{for all}\ v,h\in V.
	\end{equation}
	
	Similarly we show that
	\begin{equation}\label{eq18}
		\lim\limits_{t\rightarrow 0^-}\frac{1}{t}\left[\tilde{\varphi}(v+th)-\tilde{\varphi}(v)\right]=\left\langle \varphi'(v+\tau(v)),h\right\rangle\ \mbox{for all}\ v,h\in V.
	\end{equation}
	
	From (\ref{eq17}) and (\ref{eq18}) we conclude that
	$$\tilde{\varphi}\in C^1(V,\RR)\ \mbox{and}\ \tilde{\varphi}'(v)=p_{V^*}\varphi'(v+\tau(v))\ \mbox{for all}\ v\in V.$$
\end{proof}
\begin{proposition}\label{prop5}
	If hypotheses $H(\xi),H(\beta),H(f)$ hold, then $v\in K_{\tilde{\varphi}}$ if and only if $v+\tau(v)\in K_{\varphi}$.
\end{proposition}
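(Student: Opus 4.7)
The plan is to reduce the equivalence to a simple fact about the orthogonal decomposition $H^1(\Omega) = H_- \oplus V$. The key observation is that Proposition \ref{prop3} (via equation (\ref{eq13})) guarantees that, by construction, $p_{H_-^*}\varphi'(v+\tau(v)) = 0$ for every $v \in V$, while Proposition \ref{prop4} identifies $\tilde{\varphi}'(v)$ with $p_{V^*}\varphi'(v+\tau(v))$. So both halves of $\varphi'(v+\tau(v))$ with respect to the decomposition $H^1(\Omega)^* = H_-^* \oplus V^*$ are under control: one is automatically zero, and the other is exactly $\tilde{\varphi}'(v)$.

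For the forward direction, I would assume $v \in K_{\tilde{\varphi}}$, so $\tilde{\varphi}'(v) = 0$, hence by Proposition \ref{prop4} the $V^*$-component of $\varphi'(v+\tau(v))$ vanishes. Combining this with $p_{H_-^*}\varphi'(v+\tau(v)) = 0$ from (\ref{eq13}) and the fact that $p_{H_-^*} + p_{V^*} = \operatorname{id}_{H^1(\Omega)^*}$ under the orthogonal splitting, we conclude $\varphi'(v+\tau(v)) = 0$, i.e. $v + \tau(v) \in K_\varphi$.

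For the reverse direction, assume $v + \tau(v) \in K_\varphi$, so $\varphi'(v+\tau(v)) = 0$ in $H^1(\Omega)^*$. Projecting onto $V^*$ and invoking Proposition \ref{prop4} gives $\tilde{\varphi}'(v) = p_{V^*}\varphi'(v+\tau(v)) = 0$, so $v \in K_{\tilde{\varphi}}$.

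There is no real obstacle here; the proposition is essentially a bookkeeping consequence of the two previous results. The only point to be careful about is that the projections referred to in Propositions \ref{prop3} and \ref{prop4} are with respect to the same orthogonal splitting $H^1(\Omega) = H_- \oplus V$ (and its dual version), so that $p_{H_-^*}\varphi' = 0$ together with $p_{V^*}\varphi' = 0$ really does force $\varphi' = 0$ — this is why the orthogonality of the decomposition (rather than a mere algebraic direct sum) matters and should be stated explicitly in the write-up.
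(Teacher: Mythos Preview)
Your argument is correct and matches the paper's proof essentially verbatim: both directions rely on Proposition~\ref{prop4} for the $V^*$-component and on (\ref{eq13}) for the $H_-^*$-component, with the orthogonal splitting $H^1(\Omega)^*=H_-^*\oplus V^*$ doing the rest. The only cosmetic difference is that the paper phrases the $\Rightarrow$ step via $H_-^*\cap V^*=\{0\}$ rather than $p_{H_-^*}+p_{V^*}=\mathrm{id}$, which is of course equivalent.
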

\begin{proof}
	$\Leftarrow$ Follows from Proposition \ref{prop4}.
	
	$\Rightarrow$ Let $v\in K_{\tilde{\varphi}}$. Then
	\begin{eqnarray}\label{eq19}
		&&0=\tilde{\varphi}'(v)=p_{V^*}\varphi'(v+\tau(v))\ (\mbox{see Proposition \ref{prop4}}),\nonumber\\
		&\Rightarrow&\varphi'(v+\tau(v))\in H^*_-\ (\mbox{recall that}\ H^1(\Omega)^*=H^*_-\oplus V^*).
	\end{eqnarray}
	
	On the other hand from (\ref{eq13}) we have
	\begin{eqnarray}\label{eq20}
		&&p_{H^*_-}\varphi'(v+\tau(v))=0,\nonumber\\
		&\Rightarrow&\varphi'(v+\tau(v))\in V^*.
	\end{eqnarray}
	
	But $H^*_-\cap V^*=\{0\}$. So,  it follows from (\ref{eq19}) and (\ref{eq20}) that
	\begin{eqnarray*}
		&&\varphi'(v+\tau(v))=0,\\
		&\Rightarrow&v+\tau(v)\in K_{\varphi}.
	\end{eqnarray*}
\end{proof}
\begin{proposition}\label{prop6}
	If hypotheses $H(\xi),H(\beta),H(f)$ hold, then $\tilde{\varphi}$ is coercive.
\end{proposition}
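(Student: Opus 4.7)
I would first reduce the claim to coercivity of $\varphi$ on $V$: since by Proposition~\ref{prop3} the map $\tau(v)$ maximizes $\varphi(v+\cdot)$ over $H_-$, taking $y=0$ yields
\[
\tilde\varphi(v)=\varphi(v+\tau(v))\geq\varphi(v)\qquad\text{for every }v\in V,
\]
so it is enough to show $\varphi(v)\to+\infty$ as $\|v\|\to\infty$ with $v\in V$. On $V$, the variational characterization (\ref{eq5}) gives $\gamma(v)\geq\hat\lambda_{m+1}\|v\|_2^2$; writing $G(z,x):=2F(z,x)-\hat\lambda_{m+1}x^2$, this becomes $\varphi(v)\geq-\tfrac12\int_\Omega G(z,v)\,dz$, so the real target is $\int_\Omega G(z,v_n)\,dz\to-\infty$ whenever $\|v_n\|\to\infty$ in $V$.

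Argue by contradiction: let $v_n\in V$ satisfy $\|v_n\|\to\infty$ and $\varphi(v_n)\leq M$. Set $y_n=v_n/\|v_n\|$; by reflexivity, the compact embedding $H^1(\Omega)\hookrightarrow L^{2s/(s-1)}(\Omega)$ (valid since $s>N$), and compactness of the trace into $L^2(\partial\Omega)$, extract a subsequence $y_n\rightharpoonup y$ in $V$ with strong convergence in $L^2(\Omega)$, in $L^{2s/(s-1)}(\Omega)$, and in $L^2(\partial\Omega)$. Dividing $\varphi(v_n)\leq M$ by $\|v_n\|^2$ and combining the $\limsup$ clause in $H(f)(iii)$ (with a quadratic dominating sequence from $H(f)(i)$, via a variant of reverse Fatou under $L^1$-convergent domination) with weak lower semicontinuity of $u\mapsto\|Du\|_2^2$ and compact convergence of the $\xi$- and $\beta$-terms, one obtains $\gamma(y)\leq\hat\lambda_{m+1}\|y\|_2^2$; together with $\gamma(y)\geq\hat\lambda_{m+1}\|y\|_2^2$ on $V$ from (\ref{eq5}), equality forces $y$ to realize that infimum, hence $y\in E(\hat\lambda_{m+1})=H^0$.

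Two cases remain. If $y=0$, then $y_n\to 0$ in $L^2(\Omega)$ and on $\partial\Omega$, so $\|Dy_n\|_2^2=1-\|y_n\|_2^2\to 1$, while H\"older with $\xi\in L^s$ and compactness of the trace kill the $\xi$- and $\beta$-terms; hence $\gamma(y_n)\to 1$, contradicting $\limsup\gamma(y_n)\leq\hat\lambda_{m+1}\|y\|_2^2=0$. The main obstacle is the case $y\neq 0$: by UCP, $y\neq 0$ a.e., so $|v_n(z)|=\|v_n\||y_n(z)|\to\infty$ a.e., and I must show $\int_\Omega G(z,v_n)\,dz\to-\infty$. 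Here the Ahmad--Lazer--Paul clause of $H(f)(iii)$ is essential. Setting $\psi(z,x):=f(z,x)x-2F(z,x)$ and using $\frac{d}{dx}(G(z,x)/x^2)=2\psi(z,x)/x^3$, the map $x\mapsto G(z,x)/x^2$ is eventually monotone in $|x|$ with a nonpositive pointwise limit $L_+(z)\leq 0$ (from the $\limsup$ clause); the uniform divergence $\psi\to+\infty$ then gives, for $x>0$ large and uniformly in $z$,
\[
L_+(z)x^2-G(z,x)=x^2\int_{x}^{\infty}\frac{2\psi(z,s)}{s^3}\,ds\geq K\quad\text{whenever }x\geq M_K,
\]
and an analogous estimate for $x\to-\infty$; hence $G(z,x)\leq -K$ for $|x|\geq M_K$, uniformly in $z$. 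Splitting $\int_\Omega G(z,v_n)\,dz$ on $\{|v_n|\geq M_K\}$ and its complement, bounding $|G|\leq C(M_K)$ on the latter via $H(f)(i)$ and using $|\{|v_n|<M_K\}|\to 0$ (from $|v_n|\to\infty$ a.e.), one gets $\limsup_n\int_\Omega G(z,v_n)\,dz\leq -K|\Omega|$ for every $K$, and therefore $\int_\Omega G(z,v_n)\,dz\to-\infty$. This yields $\varphi(v_n)\to+\infty$, contradicting $\varphi(v_n)\leq M$ and proving coercivity of $\tilde\varphi$.
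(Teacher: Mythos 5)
Your argument is correct, and it reaches coercivity of $\tilde\varphi$ by a genuinely different route from the paper. The paper works with $\psi=\varphi|_V$ and proves two facts: (Claim 1) $\psi$ satisfies the Cerami condition, and (Claim 2) $\hat\lambda_{m+1}x^2-2F(z,x)\to+\infty$ uniformly; it then concludes that $\psi$ is bounded below and invokes the abstract result that a $C^1$, bounded-below functional with the C-condition is coercive, and finally uses $\tilde\varphi\geq\psi$. You instead prove coercivity of $\varphi|_V$ directly by contradiction: normalize, take the weak limit $y$, use reverse Fatou together with (\ref{eq5}) to show that $y$ realizes the infimum and hence lies in $E(\hat\lambda_{m+1})$, exclude $y=0$ by the computation $\gamma(y_n)\to 1>0$ versus $\limsup\gamma(y_n)\leq 0$ (the paper excludes the analogous degenerate case via the Kadec--Klee property, which is available there because of the Cerami derivative bound $(1+\|v_n\|)\psi'(v_n)\to 0$, a tool you do not have in a purely energy-level argument), and in the case $y\neq 0$ use UCP and the Ahmad--Lazer--Paul clause of $H(f)(iii)$ to get $\int_\Omega G(z,v_n)\,dz\to-\infty$. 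Both proofs hinge on essentially the same uniform estimate coming from $H(f)(iii)$ (your $G(z,x)\leq -K$ for $|x|\geq M_K$ is exactly the paper's Claim 2 in disguise); yours avoids the abstract coercivity lemma and is self-contained, at the cost of a longer contradiction argument, while the paper's factorization through the C-condition is modular but leans on an external theorem. One small simplification available to you: since the same estimates give a uniform upper bound on $G$, you need not pin $y$ down in $E(\hat\lambda_{m+1})$ and invoke UCP; integrating over $\Omega_0=\{y\neq 0\}$ (positive measure whenever $y\neq 0$) and bounding the complement from above would suffice, as in the paper's Claim 1.
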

\begin{proof}
	Let $\psi=\varphi|_V$. Evidently, $\psi\in C^1(V,\RR)$ and by the chain rule we have
	\begin{equation}\label{eq21}
		\psi'=p_{V^*}\circ\varphi'.
	\end{equation}
	\begin{claim}\label{cl1}
		$\psi$ satisfies the C-condition.
	\end{claim}
	
	Let $\{v_n\}_{n\geq 1}\subseteq V$ be a sequence such that
	\begin{eqnarray}
		&&|\psi(v_n)|\leq M_1\ \mbox{for some}\ M_1>0,\ \mbox{and all}\ n\in\NN,\label{eq22}\\
		&&(1+||v_n||)\psi'(v_n)\rightarrow 0\ \mbox{in}\ V^*\ \mbox{as}\ n\rightarrow\infty\label{eq23}.
	\end{eqnarray}
	
	From (\ref{eq23}) we have
	\begin{eqnarray}\label{eq24}
		&&|\left\langle \psi'(v_n),h\right\rangle_V|\leq\frac{\epsilon_n||h||}{1+||v_n||}\ \mbox{for all}\ h\in V,\ n\in\NN,\ \mbox{with}\ \epsilon_n\rightarrow 0^+,\nonumber\\
		&\Rightarrow&|\left\langle \varphi'(v_n),h\right\rangle|\leq\frac{\epsilon_n||h||}{1+||v_n||}\ \mbox{for all}\ h\in V,\ n\in\NN\ (\mbox{see (\ref{eq21})}).
	\end{eqnarray}
	
	In (\ref{eq24}) we choose $h=v_n\in V$ and obtain
	\begin{equation}\label{eq25}
		\gamma(v_n)-\int_{\Omega}f(z,v_n)v_ndz\leq\epsilon_n\ \mbox{for all}\ n\in\NN.
	\end{equation}
	
	We show that $\{v_n\}_{n\geq 1}\subseteq V$ is bounded. Arguing by contradiction, suppose that
	\begin{equation}\label{eq26}
		||v_n||\rightarrow\infty\,.
	\end{equation}
	
	Let $\hat{w}_n=\frac{v_n}{||v_n||},\ n\in\NN$. Then $\hat{w}_n\in V,\ ||\hat{w}_n||=1$ for all $n\in\NN$. By passing to a suitable subsequence if necessary, we may assume that
	\begin{equation}\label{eq27}
		\hat{w}_n\stackrel{w}{\rightarrow}\hat{w}\ \mbox{in}\ H^1(\Omega)\ \mbox{and}\ \hat{w}_n\rightarrow\hat{w}\ \mbox{in}\ L^2(\Omega)\ \mbox{and}\ L^2(\partial\Omega).
	\end{equation}
	
	Hypotheses $H(f)$ imply that
	\begin{equation}\label{eq28}
		|f(z,x)|\leq c_5|x|\ \mbox{for almost all}\ z\in\Omega,\ \mbox{all}\ x\in\RR,\ \mbox{and some}\ c_5>0.
	\end{equation}
	
	By (\ref{eq24}) we have
	\begin{equation}\label{eq29}
		\left|\left\langle \gamma'(\hat{w}_n),h\right\rangle-\int_{\Omega}\frac{N_f(v_n)}{||v_n||}hdz\right|
\leq\frac{\epsilon_n||h||}{(1+||v_n||)||v_n||}\ \mbox{for all}\ n\in\NN,\ h\in H^1(\Omega).
	\end{equation}
	
	From (\ref{eq28}) and (\ref{eq27}) we see that
	\begin{equation}\label{eq30}
		\left\{\frac{N_f(v_n)}{||v_n||}\right\}_{n\geq 1}\subseteq L^2(\Omega)\ \mbox{is bounded}.
	\end{equation}
	
	So, if in (\ref{eq29}) we choose $h=\hat{w}_n-\hat{w}\in H^1(\Omega)$, pass to the limit as $n\rightarrow\infty$ and use (\ref{eq26}), (\ref{eq27}) and (\ref{eq30}), then
	\begin{eqnarray*}
		&&\lim\limits_{n\rightarrow\infty}\left\langle A(\hat{w}_n),\hat{w}_n-\hat{w}\right\rangle=0,\\
		&\Rightarrow&||D\hat{w}_n||_2\rightarrow||D\hat{w}||_2,\\
		&\Rightarrow&\hat{w}_n\rightarrow\hat{w}\ \mbox{in}\ H^1(\Omega)\\
		&&(\mbox{by the Kadec-Klee property, see Gasinski \& Papageorgiou \cite[p. 911]{5}}),\\
		&\Rightarrow&||\hat{w}||=1\ \mbox{and so}\ \hat{w}\neq 0.
	\end{eqnarray*}
	
	Let $\Omega_0=\{z\in\Omega:\hat{w}(z)\neq 0\}$. Then $|\Omega_0|_N>0$ and
	$$v_n(z)\rightarrow\pm\infty\ \mbox{for almost all}\ z\in\Omega_0\ (\mbox{see (\ref{eq26})}).$$
	
	Hypothesis $H(f)(iii)$ implies that
	\begin{equation}\label{eq31}
		f(z,v_n(z))v_n(z)-2F(z,v_n(z))\rightarrow+\infty\ \mbox{for almost all}\ z\in\Omega_0.
	\end{equation}
	
	From (\ref{eq31}) via Fatou's lemma (hypothesis $H(f)(iii)$ permits its use), we have
	\begin{equation}\label{eq32}
		\int_{\Omega_0}[f(z,v_n)v_n-2F(z,v_n)]dz\rightarrow+\infty.
	\end{equation}
	
	Using hypothesis $H(f)(iii)$  we see that we can find $M_2>0$ such that
	\begin{equation}\label{eq33}
		f(z,x)x-2F(z,x)\geq 0\ \mbox{for almost all}\ z\in\Omega,\ \mbox{all}\ |x|\geq M_2.
	\end{equation}
	
	So, we have
	\begin{eqnarray*}
		&&\int_{\Omega\backslash\Omega_0}[f(z,v_n)v_n-2F(z,v_n)]dz\\
		&=&\int_{(\Omega\backslash\Omega_0)\cap\{|v_n|\geq M_2\}}[f(z,v_n)v_n-2F(z,v_n)]dz+\\
		&&\hspace{1cm}\int_{(\Omega\backslash\Omega_0)\cap\{|v_n|<M_2\}}[f(z,v_n)v_n-2F(z,v_n)]dz\\
		&\geq&\int_{(\Omega\backslash\Omega_0)\cap\{|v_n|<M_2\}}[f(z,v_n)v_n-2F(z,v_n)]dz\ (\mbox{see (\ref{eq33})})\\
		&\geq&-M_3\ \mbox{for some}\ M_3>0,\ \mbox{all}\ n\in\NN\ (\mbox{see hypothesis}\ H(f)(i)).
	\end{eqnarray*}
	
	Then
	\begin{eqnarray}\label{eq34}
		&&\int_{\Omega}[f(z,v_n)v_n-2F(z,v_n)]dz\nonumber\\
		&=&\int_{\Omega_0}[f(z,v_n)v_n-2F(z,v_n)]dz+\int_{\Omega\backslash\Omega_0}[f(z,v_n)v_n-2F(z,v_n)]dz\nonumber\\
		&\geq&\int_{\Omega_0}[f(z,v_n)v_n-2F(z,v_n)]dz-M_3\ \mbox{for all}\ n\in\NN\nonumber\\
		\Rightarrow&&\int_{\Omega}[f(z,v_n)v_n-2F(z,v_n)]dz\rightarrow+\infty\ \mbox{as}\ n\rightarrow\infty\ (\mbox{see (\ref{eq32})}).
	\end{eqnarray}
	
	From (\ref{eq24}) with $h=v_n\in H^1(\Omega)$, we have
	\begin{equation}\label{eq35}
		-\gamma(v_n)+\int_{\Omega}f(z,v_n)v_ndz\leq\epsilon_n\ \mbox{for all}\ n\in\NN.
	\end{equation}
	
	Also, from (\ref{eq22}) we have
	\begin{equation}\label{eq36}
		\gamma(v_n)-\int_{\Omega}2F(z,v_n)dz\leq 2M_1\ \mbox{for all}\ n\in\NN.
	\end{equation}
	
	We add (\ref{eq35}) and (\ref{eq36}) and obtain
	\begin{equation}\label{eq37}
		\int_{\Omega}[f(z,v_n)v_n-2F(z,v_n)]dz\leq M_4\ \mbox{for some}\ M_4>0,\ \mbox{and all}\ n\in\NN.
	\end{equation}
	
	Comparing (\ref{eq34}) and (\ref{eq37}), we get a contradiction. This proves that $\{v_n\}_{n\geq 1}\subseteq V$ is bounded. So, we may assume that
	\begin{equation}\label{eq38}
		v_n\stackrel{w}{\rightarrow}u\ \mbox{in}\ H^1(\Omega)\ \mbox{and}\ v_n\rightarrow u\ \mbox{in}\ L^2(\Omega)\ \mbox{and}\ L^2(\partial\Omega).
	\end{equation}
	
	In (\ref{eq24}) we choose $h=v_n-u\in H^1(\Omega)$, pass to the limit as $n\rightarrow\infty$ and use (\ref{eq38}). Then
	\begin{eqnarray*}
		&&\lim\limits_{n\rightarrow\infty}\left\langle A(v_n),v_n-u\right\rangle=0,\\
		&\Rightarrow& v_n\rightarrow u\ \mbox{in}\ H^1(\Omega)\ (\mbox{as before via the Kadec-Klee property}).
	\end{eqnarray*}
	
	This proves Claim \ref{cl1}.
	\begin{claim}\label{cl2}
		$\hat{\lambda}_{m+1}x^2-2F(z,x)\rightarrow+\infty$ as $x\rightarrow +\infty$ uniformly for almost all $z\in\Omega$.
	\end{claim}
	
	Hypothesis $H(f)(iii)$ implies that given any $\lambda>0$, we can find $M_5=M_5(\lambda)>0$ such that
	\begin{equation}\label{eq39}
		f(z,x)x-2F(z,x)\geq\lambda\ \mbox{for almost all}\ z\in\Omega,\ \mbox{and all}\ |x|\geq M_5.
	\end{equation}
	
	For almost all $z\in\Omega$, we have
	\begin{eqnarray}\label{eq40}
		&&\frac{d}{dx}\left(\frac{F(z,x)}{|x|^2}\right)=\frac{f(z,x)x-2F(z,x)}{|x|^2x}\left\{\begin{array}{ll}
			\geq\frac{\lambda}{x^2}&\mbox{if}\ x\geq M_5\\
			\leq\frac{\lambda}{|x|^2x}&\mbox{if}\ x\leq -M_5
		\end{array}\right.\ (\mbox{see (\ref{eq39})}),\nonumber\\
		&\Rightarrow&\frac{F(z,y)}{|y|^2}-\frac{F(z,v)}{|v|^2}\geq\frac{\lambda}{2}\left[\frac{1}{|v|^2}-\frac{1}{|y|^2}\right]\ \mbox{for all}\ |y|\geq|v|\geq M_5.
	\end{eqnarray}
	
	We let $|y|\rightarrow\infty$ and use hypothesis $H(f)(iii)$. Then
	$$\hat{\lambda}_{m+1}|v|^2-2F(z,v)\geq\lambda\ \mbox{for almost all}\ z\in\Omega,\ \mbox{and all}\ |v|\geq M_5.$$
	
	Since $\lambda>0$ is arbitrary, we conclude that
	$$\hat{\lambda}_{m+1}|v|^2-2F(z,v)\rightarrow+\infty\ \mbox{as $v\rightarrow +\infty$ uniformly for almost all}\ z\in\Omega.$$
	
	This proves Claim \ref{cl2}.
	
	For every $v\in V$, we have
	\begin{eqnarray}\label{eq41}
		\psi(v)=\varphi(v)&=&\frac{1}{2}\gamma(v)-\int_{\Omega}F(z,v)dz\nonumber\\
		&\geq&\int_{\Omega}\left[\frac{1}{2}\hat{\lambda}_{m+1}v^2-F(z,v)\right]dz\ (\mbox{see (\ref{eq5})})\nonumber\\
		&\geq&-c_6\ \mbox{for some}\ c_6>0\ (\mbox{see Claim \ref{cl2} and hypothesis H(f)(i)})\nonumber\\
		&\Rightarrow&\psi\ \mbox{is bounded below}.
	\end{eqnarray}
	
	From (\ref{eq41}) and Claim \ref{cl1} it follows that
	$$\psi\ \mbox{is coercive}$$
	(see Motreanu, Motreanu \& Papageorgiou \cite[p. 103]{11}).
	
	We have
	\begin{eqnarray*}
		&&\psi(v)=\varphi(v)\leq\varphi(v+\tau(v))=\tilde{\varphi}(v)\ \mbox{for all}\ v\in V\ (\mbox{see (\ref{eq12})}),\\
		&\Rightarrow&\tilde{\varphi}\ \mbox{is coercive}.
	\end{eqnarray*}
\end{proof}

From Proposition \ref{prop5}, we deduce that:
\begin{corollary}\label{cor6}
	If hypotheses $H(\xi),H(\beta),H(f)$ hold, then $\tilde{\varphi}$ is bounded below and satisfies the C-condition.
\end{corollary}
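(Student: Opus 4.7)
The plan is to derive both conclusions from Proposition \ref{prop6} and the basic identities relating $\tilde{\varphi}$ and $\varphi$. Boundedness below is essentially free: by (\ref{eq12}) applied with $y=0$, $\tilde{\varphi}(v)=\varphi(v+\tau(v))\geq\varphi(v)$ for every $v\in V$, and the inequality $\varphi(v)\geq -c_6$ on $V$ is already established in (\ref{eq41}), so $\inf_V\tilde{\varphi}\geq -c_6$.

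For the C-condition I would take a sequence $\{v_n\}_{n\geq 1}\subseteq V$ with $\{\tilde{\varphi}(v_n)\}$ bounded and $(1+\|v_n\|)\tilde{\varphi}'(v_n)\to 0$ in $V^*$. Coercivity of $\tilde{\varphi}$ (Proposition \ref{prop6}) immediately gives that $\{v_n\}$ is bounded in $V$. The next step is to bound $\{\tau(v_n)\}\subseteq H_-$: imitating the estimate from the continuity argument of Proposition \ref{prop3}, testing $\hat{\varphi}'_{v_n}$ at $\tau(v_n)$ and $\tau(0)=0$, and using the strong monotonicity (\ref{eq7}) together with $\|\varphi'(v_n)\|_*\leq c_4$ (since $\varphi'$ is bounded on bounded sets), yields $c_1\|\tau(v_n)\|^2\leq c_4\|\tau(v_n)\|$. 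Setting $u_n=v_n+\tau(v_n)$, the identity $\tilde{\varphi}'(v_n)=p_{V^*}\varphi'(u_n)$ of Proposition \ref{prop4}, combined with $p_{H_-^*}\varphi'(u_n)=0$ from (\ref{eq13}) and the direct sum decomposition $H^1(\Omega)^*=H_-^*\oplus V^*$, yields $\varphi'(u_n)\to 0$ in $H^1(\Omega)^*$. Passing to a subsequence, $u_n\stackrel{w}{\rightarrow}u$ in $H^1(\Omega)$ with strong convergence in $L^2(\Omega)$ and $L^2(\partial\Omega)$; testing $\langle\varphi'(u_n),u_n-u\rangle\to 0$, the reaction integral vanishes in the limit by $H(f)(i)$ and the compactness of the embedding $H^1(\Omega)\hookrightarrow L^2(\Omega)$, so $\langle A(u_n),u_n-u\rangle\to 0$. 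The Kadec-Klee argument used in Claim \ref{cl1} then gives $u_n\to u$ strongly in $H^1(\Omega)$, and applying the continuous orthogonal projection onto $V$ one concludes $v_n\to p_V(u)$ strongly in $V$.

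The step I expect to be most delicate is the uniform boundedness of $\{\tau(v_n)\}$, since it requires turning the pointwise continuity estimate inside the proof of Proposition \ref{prop3} into a uniform one. This succeeds because the strong monotonicity constant $c_1$ from Lemma \ref{lem2} is independent of $v$, and the boundedness of $\{v_n\}$ (coming from coercivity of $\tilde{\varphi}$) controls $\|\varphi'(v_n)\|_*$ uniformly in $n$. Once this bound is in place, the lifting $\varphi'(u_n)\to 0$ and the standard Kadec-Klee routine close the verification of the C-condition for $\tilde{\varphi}$.
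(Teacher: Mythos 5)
Your proof is correct. The paper gives no explicit argument here (it simply records the corollary as a one-line deduction from the coercivity established in Proposition \ref{prop6}), so your job was to supply the missing details, and you do so correctly and entirely in the spirit of the paper's own machinery: coercivity bounds a Cerami sequence $\{v_n\}$, the strong monotonicity estimate (\ref{eq7}) with the uniform constant $c_1$ gives a uniform bound on $\{\tau(v_n)\}$, the combination of $\tilde\varphi'(v_n)=p_{V^*}\varphi'(u_n)$ from Proposition \ref{prop4} with $p_{H_-^*}\varphi'(u_n)=0$ from (\ref{eq13}) lifts to $\varphi'(u_n)\to 0$ in $H^1(\Omega)^*$, and the Kadec--Klee routine from Claim \ref{cl1} (together with the compact embeddings handling the $\xi$, $\beta$ and reaction terms) yields strong convergence of a subsequence, hence of $v_n=p_V(u_n)$.
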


Next we show that $\tilde{\varphi}$ admits a local linking (see Theorem \ref{th1}) with respect to the orthogonal direct sum decomposition $V=W\oplus\hat{E}$ where $W=\overset{l-1}{\underset{\mathrm{i=m+1}}\oplus}E(\hat{\lambda}_i),\hat{E}=\overline{{\underset{\mathrm{i\geq l}}\oplus}E(\lambda_i)}$.
\begin{proposition}\label{prop7}
	If hypotheses $H(\xi),H(\beta),H(f)$ hold, then $\tilde{\varphi}$ has a local linking at $u=0$ with respect to the decomposition
	$$V=W\oplus\hat{E}.$$
\end{proposition}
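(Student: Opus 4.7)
The plan is to verify the two local-linking inequalities separately, exploiting the fact that one half of the decomposition $V=W\oplus\hat E$ sits inside a finite-dimensional space while the other does not.

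\textbf{The $W$-side (nonpositivity).} First I would handle the subspace $W=\bigoplus_{i=m+1}^{l-1}E(\hat\lambda_i)$. Observe that $H_-\oplus W=\bigoplus_{k=1}^{l-1}E(\hat\lambda_k)$ is finite-dimensional, so all norms on it are equivalent; in particular the $H^1$-norm dominates $\|\cdot\|_\infty$ on this space. Combined with the continuity of $\tau$ at $0$ (with $\tau(0)=0$, as already shown in Proposition~\ref{prop3}), this gives $\rho_1>0$ such that $\|w+\tau(w)\|_\infty\le\delta$ whenever $w\in W$ and $\|w\|\le\rho_1$. Integrating the lower half of hypothesis $H(f)(iv)$ yields $F(z,x)\ge\tfrac12\hat\lambda_{l-1}x^2$ for $|x|\le\delta$. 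Hence, since $w+\tau(w)\in\bigoplus_{k=1}^{l-1}E(\hat\lambda_k)$, the variational characterization (\ref{eq5}) (as a supremum) gives
\[
\tilde\varphi(w)=\varphi(w+\tau(w))\le\tfrac12\gamma(w+\tau(w))-\tfrac12\hat\lambda_{l-1}\|w+\tau(w)\|_2^2\le 0.
\]

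\textbf{The $\hat E$-side (nonnegativity).} For $\hat e\in\hat E$ the subspace is infinite-dimensional, so pointwise smallness is unavailable. The trick here is that, since $0\in H_-$ and $\tau(\hat e)$ is the \emph{maximizer} of $y\mapsto\varphi(\hat e+y)$ on $H_-$, we have the pointwise comparison
\[
\tilde\varphi(\hat e)=\varphi(\hat e+\tau(\hat e))\ge\varphi(\hat e+0)=\varphi(\hat e),
\]
so it suffices to show $\varphi(\hat e)\ge 0$ for $\hat e\in\hat E$ of small $H^1$-norm. Integrating the upper half of $H(f)(iv)$ gives $F(z,x)\le\tfrac12\vartheta(z)x^2$ for $|x|\le\delta$, while $H(f)(i)$ produces a quadratic bound for large $|x|$. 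Combining these, I can fix some subcritical exponent $r\in(2,2^*)$ and obtain a constant $C>0$ with
\[
F(z,x)\le\tfrac12\vartheta(z)x^2+C|x|^r\qquad\text{for a.a.\ }z\in\Omega,\ \text{all }x\in\RR.
\]
Integrating and invoking Lemma~\ref{lem2}(b) with $\eta=\vartheta$ (applicable since $\vartheta\le\hat\lambda_l$, $\vartheta\not\equiv\hat\lambda_l$) on $\hat E=\overline{\bigoplus_{k\ge l}E(\hat\lambda_k)}$, plus the continuous Sobolev embedding $H^1(\Omega)\hookrightarrow L^r(\Omega)$, yields
\[
\varphi(\hat e)\ge\tfrac12\!\left[\gamma(\hat e)-\!\int_\Omega\vartheta(z)\hat e^{\,2}dz\right]-C\|\hat e\|_r^{\,r}\ge\tfrac{c_2}{2}\|\hat e\|^2-C'\|\hat e\|^r.
\]
Since $r>2$, this is nonnegative once $\|\hat e\|\le\rho_2$ for some $\rho_2>0$. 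Setting $\rho=\min(\rho_1,\rho_2)$ furnishes the local linking.

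\textbf{Expected difficulty.} The calculations in each direction are routine once the right inequality is in place; the conceptual obstacle is precisely the asymmetry I exploited above. On $W$ the pointwise regime $|x|\le\delta$ of $H(f)(iv)$ is directly accessible because $H_-\oplus W$ is finite-dimensional, but on the infinite-dimensional $\hat E$ one cannot control $\|\hat e\|_\infty$ by $\|\hat e\|$. Recognizing that the maximizing property of $\tau$ lets one replace $\tilde\varphi(\hat e)$ by $\varphi(\hat e)$, and then using the subcritical-remainder trick to convert a pointwise-for-small-$x$ estimate into a global quadratic-plus-higher-order bound, is the key manoeuvre that keeps the argument on Sobolev-space rather than $C^1(\overline\Omega)$ footing.
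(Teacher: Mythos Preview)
Your proof is correct and follows essentially the same route as the paper's: on $\hat E$ you bound $\tilde\varphi(\hat e)\ge\varphi(\hat e)$ via the maximizing property of $\tau$, then combine the global estimate $F(z,x)\le\tfrac12\vartheta(z)x^2+C|x|^r$ with Lemma~\ref{lem2}(b); on $W$ you use finite-dimensionality of $H_-\oplus W$, continuity of $\tau$ with $\tau(0)=0$, the pointwise bound from $H(f)(iv)$, and the variational characterization~(\ref{eq5}). Apart from the order of presentation and your (correct) explicit mention of the subcriticality constraint $r<2^*$ for the Sobolev embedding, the argument matches the paper's.
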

\begin{proof}
	From hypotheses $H(f)(i),(iv)$, we see that given $r>2$, we can find $c_7=c_7(r)>0$ such that
	\begin{equation}\label{eq42}
		F(z,x)\leq\frac{\vartheta(z)}{2}x^2+c_7|x|^r\ \mbox{for almost all}\ z\in\Omega,\ \mbox{all}\ x\in\RR.
	\end{equation}
	
	For $\hat{v}\in\hat{E}$ we have
	\begin{eqnarray*}
		\tilde{\varphi}(\hat{v})&=&\varphi(\hat{v}+\tau(\hat{v}))\\
		&\geq&\varphi(\hat{v})\ \mbox{(see Proposition \ref{prop3})}\\
		&=&\frac{1}{2}\gamma(\hat{v})-\int_{\Omega}F(z,\hat{v})dz\\
		&\geq&\frac{1}{2}\gamma(\hat{v})-\frac{1}{2}\int_{\Omega}\vartheta(z)\hat{v}^2dz-c_8||\hat{v}||^r\ \mbox{for some}\ c_8>0\ (\mbox{see (\ref{eq42})})\\
		&\geq&c_9||\hat{v}||^2-c_8||\hat{v}||^r\ \mbox{for some}\ c_9>0\ (\mbox{see Lemma \ref{lem2}(b)}).
	\end{eqnarray*}
	
	Since $r>2$, we see that we can find $\rho_1\in(0,1)$ small such that
	\begin{equation}\label{eq43}
		\tilde{\varphi}(\hat{v})>0\ \mbox{for all}\ \hat{v}\in\hat{E}\ \mbox{with}\ 0<||\hat{v}||\leq\rho_1.
	\end{equation}
	
	The space $Y=H_-\oplus W$ is finite dimensional and so all norms are equivalent. Hence we can find $\epsilon_0>0$ such that
	\begin{equation}\label{eq44}
		y\in Y\ \mbox{and}\ ||y||\leq\epsilon_0\Rightarrow|y(z)|\leq\delta\ \mbox{for all}\ z\in\overline{\Omega}\ (\mbox{recall that}\ Y\subseteq C^1(\overline{\Omega})).
	\end{equation}
	
	By Proposition \ref{prop3} we know that $\tau(\cdot)$ is continuous. So, we can find $\rho_2>0$ such that
	\begin{equation}\label{eq45}
		\tilde{u}\in W\ \mbox{and}\ ||\tilde{u}||\leq\rho_2\Rightarrow||\tilde{u}+\tau(\tilde{u})||\leq\epsilon_0.
	\end{equation}
	
	From (\ref{eq44}) and (\ref{eq45}) it follows that
	\begin{eqnarray*}
		\tilde{\varphi}(\tilde{u})&=&\varphi(\tilde{u}+\tau(\tilde{u}))\\
		&=&\frac{1}{2}\gamma(\tilde{u}+\tau(\tilde{u}))-\int_{\Omega}F(z,\tilde{u}+\tau(\tilde{u}))dz\\
		&\leq&\frac{1}{2}\hat{\lambda}_{l-1}||\tilde{u}+\tau(\tilde{u})||^2_2-\frac{1}{2}\hat{\lambda}_{l-1}||\tilde{u}+\tau(\tilde{u})||^2_2\ (\mbox{see hypothesis}\ H(f)(iv))\\
		&=&0.
	\end{eqnarray*}
	
	So, we have that
	\begin{equation}\label{eq46}
		\tilde{\varphi}(\tilde{u})\leq 0\ \mbox{for all}\ \tilde{u}\in W\ \mbox{with}\ ||\tilde{u}||\leq\rho_2.
	\end{equation}
	
	If $\rho=\min\{\rho_1,\rho_2\}$, then from (\ref{eq43}) and (\ref{eq46}) we conclude that $\varphi$ has a local linking at $u=0$ with respect to the decomposition $V=W\oplus\hat{E}$.
\end{proof}

Now we are ready for proving our multiplicity theorem.
\begin{theorem}\label{th8}
	If hypotheses $H(\xi),H(\beta),H(f)$ hold, then problem (\ref{eq1}) admits at least two nontrivial solutions
	$$u_0,\hat{u}\in C^1(\overline{\Omega}).$$
\end{theorem}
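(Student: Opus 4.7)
The plan is to reduce the problem to a direct application of the Br\'ezis--Nirenberg local linking theorem (Theorem \ref{th1}) for the reduced functional $\tilde\varphi$ on $V=W\oplus\hat E$, and then upgrade the resulting weak solutions to $C^1(\overline\Omega)$ by linear elliptic regularity.

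First I would verify the hypotheses of Theorem \ref{th1} for $\tilde\varphi$ on $V=W\oplus\hat E$. The decomposition has $\dim W<+\infty$ by construction (finite direct sum of finite-dimensional eigenspaces), $\tilde\varphi\in C^1(V,\RR)$ by Proposition \ref{prop4}, $\tilde\varphi$ is bounded below and satisfies the C-condition by Corollary \ref{cor6}, and $\tilde\varphi(0)=\varphi(\tau(0))=\varphi(0)=0$ since $\tau(0)=0$ (noted in the proof of Proposition \ref{prop3}) and $F(z,0)=0$. The local linking decomposition with $\tilde\varphi\le 0$ on a small $W$-ball and $\tilde\varphi\ge 0$ on a small $\hat E$-ball is exactly Proposition \ref{prop7}. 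It then remains to verify $\inf_V\tilde\varphi<0$ strictly. I would pick a nonzero $\tilde u\in W$ of small norm; since $\tilde u+\tau(\tilde u)\in H_-\oplus W=\bigoplus_{k=1}^{l-1}E(\hat\lambda_k)$ and $\|\tilde u+\tau(\tilde u)\|_\infty\le\delta$ on this finite-dimensional space, the estimate already used in Proposition \ref{prop7} gives $\tilde\varphi(\tilde u)\le 0$, and the inequality is strict as soon as either $\tilde u+\tau(\tilde u)\notin E(\hat\lambda_{l-1})$ (so that the variational characterisation (\ref{eq5}) is strict) or $2F(z,\cdot)>\hat\lambda_{l-1}(\cdot)^2$ on a set of positive measure along the chosen line; choosing $\tilde u$ in a component $E(\hat\lambda_k)$ with $m+1\le k<l-1$ when $l\ge m+3$, and exploiting the UCP together with $\vartheta\not\equiv\hat\lambda_l$ when $l=m+2$, yields the required strict negativity.

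With all the hypotheses of Theorem \ref{th1} in place, I obtain two nontrivial critical points $v_0,\hat v\in V\setminus\{0\}$ of $\tilde\varphi$. By Proposition \ref{prop5}, $u_0:=v_0+\tau(v_0)$ and $\hat u:=\hat v+\tau(\hat v)$ belong to $K_\varphi$; they are distinct and nontrivial because the map $v\mapsto v+\tau(v)$ from $V$ to $H^1(\Omega)=H_-\oplus V$ is injective and sends the nonzero vector $v$ (whose $V$-component is $v\neq 0$) to a nonzero element. Since $\varphi$ is the energy functional for problem (\ref{eq1}), both $u_0$ and $\hat u$ are weak solutions of (\ref{eq1}).

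The last step is the regularity upgrade. Each solution $u\in H^1(\Omega)$ satisfies $-\Delta u=g(z)$ weakly in $\Omega$ with $\frac{\partial u}{\partial n}+\beta(z)u=0$ on $\partial\Omega$, where $g(z):=f(z,u(z))-\xi(z)u(z)$. Using hypothesis $H(f)(i)$, the Sobolev embedding, and $\xi\in L^s(\Omega)$ with $s>N$, a standard Moser/bootstrap iteration first yields $u\in L^\infty(\Omega)$, after which $g\in L^s(\Omega)$ and the Calder\'on--Zygmund / Robin regularity theory of Wang \cite{19} gives $u\in W^{2,s}(\Omega)\hookrightarrow C^{1,\alpha}(\overline\Omega)$ (since $s>N$). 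Thus $u_0,\hat u\in C^1(\overline\Omega)$ as asserted.

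The main obstacle I expect is the verification that $\inf_V\tilde\varphi<0$ is strict: the direct estimate in Proposition \ref{prop7} is borderline, and one must carefully separate the cases $l\ge m+3$ and $l=m+2$, invoking the unique continuation property of the eigenfunctions and the fact that $\vartheta\not\equiv\hat\lambda_l$ to turn a non-strict inequality into a strict one. Everything else is either already proved in the earlier propositions or is routine elliptic regularity.
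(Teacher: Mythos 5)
Your overall plan is the same as the paper's (Brezis--Nirenberg local linking for $\tilde\varphi$ on $V=W\oplus\hat E$, then Proposition \ref{prop5} to transport critical points back to $\varphi$, then elliptic regularity), and the regularity step you sketch is essentially what Lemmata 5.1 and 5.2 of Wang \cite{19} deliver, except that the paper applies them more directly by rewriting the equation as $-\Delta u = (b-\xi)u$ with $b(z)=f(z,u(z))/u(z)\in L^\infty(\Omega)$, which avoids a separate bootstrap. So the skeleton is right, but there is one genuine gap.

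The gap is exactly where you flagged it: the verification that $\inf_V\tilde\varphi<0$ is strict. Your argument for $l\geq m+3$ is sound: taking $0\neq\tilde u\in E(\hat\lambda_k)$ with $m+1\leq k\leq l-2$ puts $w=\tilde u+\tau(\tilde u)$ in $\bigoplus_{i=1}^{l-2}E(\hat\lambda_i)\setminus\{0\}$, whence $\gamma(w)\leq\hat\lambda_{l-2}\|w\|_2^2<\hat\lambda_{l-1}\|w\|_2^2$ and the inequality in Proposition \ref{prop7} becomes strict. But the case $l=m+2$ cannot be closed the way you propose. In that case $W=E(\hat\lambda_{m+1})=E(\hat\lambda_{l-1})$, and the hypothesis $\vartheta\not\equiv\hat\lambda_l$ is an \emph{upper} bound condition on $f(z,x)/x$ (it is used in (\ref{eq43}) via Lemma \ref{lem2}(b)); it gives no strictness in the \emph{lower} bound $\hat\lambda_{l-1}x^2\leq f(z,x)x$, which is exactly what enters the estimate $\tilde\varphi(\tilde u)\leq 0$. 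Concretely, take $f(z,x)=\hat\lambda_{l-1}x$ for $|x|\leq\delta$ and $\vartheta\equiv\hat\lambda_{l-1}$ (which satisfies $\vartheta\leq\hat\lambda_l$, $\vartheta\not\equiv\hat\lambda_l$); if moreover $\tau(\tilde u)=0$ for the chosen $\tilde u$ (a live possibility), then $w=\tilde u\in E(\hat\lambda_{l-1})$, $\gamma(w)=\hat\lambda_{l-1}\|w\|_2^2$, $F(z,w)=\tfrac12\hat\lambda_{l-1}w^2$, and $\tilde\varphi(\tilde u)=0$ exactly. No invocation of UCP or of $\vartheta\not\equiv\hat\lambda_l$ can rule this out.

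The paper avoids the issue entirely with a case split that you should adopt. From Proposition \ref{prop7} one only knows $\inf_V\tilde\varphi\leq 0$. If $\inf_V\tilde\varphi<0$, then Theorem \ref{th1} (with Corollary \ref{cor6} supplying boundedness below and the C-condition) gives two nontrivial critical points of $\tilde\varphi$, and Proposition \ref{prop5} transports them to $K_\varphi$. If instead $\inf_V\tilde\varphi=0$, then the inequality $\tilde\varphi(\tilde u)\leq 0$ for $\tilde u\in W$, $\|\tilde u\|\leq\rho$, forces $\tilde\varphi(\tilde u)=0=\inf_V\tilde\varphi$ for \emph{every} such $\tilde u$; hence all these $\tilde u$ are global minimisers, thus critical points of $\tilde\varphi$, and $\tilde u+\tau(\tilde u)$ furnish infinitely many nontrivial critical points of $\varphi$. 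This dichotomy is what makes the argument work without having to establish strict negativity.

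One small further remark: the nontriviality of $u_0,\hat u$ follows immediately from the orthogonal decomposition $H^1(\Omega)=H_-\oplus V$ (the $V$-component of $v+\tau(v)$ is $v\neq 0$), as you noted; that part is fine.
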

\begin{proof}
	From Proposition \ref{prop7} we know that
	$$\inf\limits_{V}\tilde{\varphi}\leq 0.$$
	
	If $\inf\limits_{V}\tilde{\varphi}=0$, then by Proposition \ref{prop7} all $\tilde{u}\in W$ with $0<||\tilde{u}||\leq\rho$ are nontrivial critical points of $\tilde{\varphi}$. Hence $\tilde{u}+\tau(\tilde{u})$ are nontrivial critical points of $\varphi$ (see Proposition \ref{prop5}).
	
	If $\inf\limits_{V}\tilde{\varphi}<0$, then we can apply Theorem \ref{th1} (see Corollary \ref{cor6}) and produce two nontrivial critical points $\tilde{u}_0$ and $\tilde{u}_*$ of $\tilde{\varphi}$. Then
	$$u_0=\tilde{u}_0+\tau(\tilde{u}_0)\ \mbox{and}\ \hat{u}=\tilde{u}_*+\tau(\hat{u}_*)$$
	are two nontrivial critical points of $\varphi$ (see Proposition \ref{prop5}).
	
	For $u=u_0$ or $u=\hat{u}$, we have
	\begin{eqnarray}\label{eq47}
		&&-\Delta u(z)+\xi(z)u(z)=f(z,u(z))\ \mbox{for almost all}\ z\in\Omega,\\
		&&\frac{\partial u}{\partial n}+\beta(z)u=0\ \mbox{on}\ \partial\Omega\ (\mbox{see Papageorgiou \& R\u adulescu \cite{16,15}}).\nonumber
	\end{eqnarray}
	
	Evidently, hypotheses $H(f)$ imply that
	\begin{equation}\label{eq48}
		|f(z,x)|\leq c_{10}|x|\ \mbox{for almost all}\ x\in\RR,\ \mbox{and some}\ c_{10}>0.
	\end{equation}
	
	We set
	$$b(z)=\left\{\begin{array}{ll}
		\frac{f(z,u(z))}{u(z)}&\mbox{if}\ u(z)\neq 0\\
		0&\mbox{if}\ u(z)=0.
	\end{array}\right.$$
	
	From (\ref{eq48}) it follows that $b\in L^{\infty}(\Omega)$. From (\ref{eq47}) we have
	$$\left\{\begin{array}{ll}
		-\Delta u(z)=(b-\xi)(z)u(z)\ \mbox{for almost all}\ z\in\Omega,&\\
		\frac{\partial u}{\partial n}+\beta(z)u=0\ \mbox{on}\ \partial\Omega.&
	\end{array}\right.$$
	
	Note that $b-\xi\in L^s(\Omega),\ s>N$ (see hypothesis $H(\xi)$). Then Lemmata 5.1 and 5.2 of Wang \cite{19} imply that
	\begin{eqnarray*}
		&&u\in W^{2,s}(\Omega),\\
		&\Rightarrow&u\in C^{1,\alpha}(\overline{\Omega})\ \mbox{with}\ \alpha=1-\frac{N}{s}>0\ (\mbox{by the Sobolev embedding theorem}).
	\end{eqnarray*}
	
	Therefore $u_0,\hat{u}\in C^1(\overline{\Omega})$.
\end{proof}

\medskip
{\bf Acknowledgments.} This research was supported by the Slovenian Research Agency grants P1-0292, J1-8131, J1-7025. V.D. R\u adulescu acknowledges the support through a grant of the Romanian Ministry of Research and
Innovation, CNCS - UEFISCDI, project number PN-III-P4-ID-PCE-2016-0130, within
PNCDI III.

\end{document}